 \newtheorem{thm}{Theorem}[section]
 \newtheorem{prop}[thm]{Proposition}
 \newtheorem{lm}[thm]{Lemma}
 \newtheorem{res}[thm]{Result}
 \newtheorem{crl}[thm]{Corollary}
 \theoremstyle{definition}
 \newtheorem{rmk}[thm]{Remark}
 \newtheorem{df}[thm]{Definition}
 \newtheorem{constr}[thm]{Construction}
 \newtheorem{que}[thm]{Question}
 \newcommand{\vspan}[1]{\left \langle #1 \right \rangle}
 \newcommand{\set}[1]{ \left \{ #1 \right \} }
 \newcommand{\sett}[2]{ \left\{ #1 \, \, || \, \, #2 \right \} }
 \newcommand{\gauss}[2]{\genfrac{[}{]}{0pt}{}{#1}{#2}_q}
 \newcommand{\gaussTwo}[2]{\genfrac{[}{]}{0pt}{}{#1}{#2}_2}
 \newcommand{\pg}{\textnormal{PG}}
\renewcommand{\geq}{\geqslant}
\renewcommand{\leq}{\leqslant}
\title{Blocking subspaces with points and hyperplanes}
\author{Sam Adriaensen \\ \textit{Vrije Universiteit Brussel} \and Maarten De Boeck \\ \textit{University of Memphis} \and Lins Denaux \\ \textit{Ghent University}}
\date{}
\begin{document}

\maketitle

\begin{abstract}
In this paper, we characterise the smallest sets $B$ consisting of points and hyperplanes in $\pg(n,q)$, such that each $k$-space is incident with at least one element of $B$.
If $k > \frac {n-1}2$, then the smallest construction consists only of points.
Dually, if $k < \frac{n-1}2$, the smallest example consists only of hyperplanes.
However, if $k = \frac{n-1}2$, then there exist sets containing both points and hyperplanes, which are smaller than any blocking set containing only points or only hyperplanes.
\end{abstract}

\textbf{Keywords:} Finite geometry, Projective geometry, Blocking sets.

\textbf{MSC:} 51E21.

\begin{tcolorbox}[colback=red!5!white,colframe=red!75!black,title=Update]
 After publication of this paper, we discovered that in case $k = \frac{n-1}2$, the correct lower bound and a classification of the smallest examples was already established by Blokhuis, Brouwer, and Sz\H onyi \cite[Proposition 5.1]{Blokhuis_Brouwer:12}.
\end{tcolorbox}

\section{Introduction}

Throughout this article, $q$ will denote a prime power, $k$ and $n$ will be integers satisfying $0 \leq k < n$, and $\pg(n,q)$ will denote the $n$-dimensional projective space over the finite field with $q$ elements.
When we mention dimensions, we always mean projective dimensions.

Blocking sets in projective spaces are classically defined as follows.

\begin{df}
A set of points $B$ in $\pg(n,q)$ is called a \emph{blocking set} with respect to $k$-spaces if every $k$-space of $\pg(n,q)$ is incident with at least one element of $B$.
\end{df}

The main objective in the study of blocking sets is the characterisation of blocking sets of small size. For a survey, see \cite{blokhuissziklaiszonyi}. A classical result by Bose and Burton characterises the smallest blocking sets.

\begin{prop}[{\cite{boseburton}}]
 \label{BoseBurtonProp}
    The smallest blocking sets with respect to $k$-spaces in $\pg(n,q)$ are precisely the point sets of $(n-k)$-spaces.
\end{prop}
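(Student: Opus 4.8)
The plan is to prove the statement in three steps, writing $\theta_m := (q^{m+1}-1)/(q-1)$ for the number of points of an $m$-space: (i) an $(n-k)$-space is a blocking set of size $\theta_{n-k}$; (ii) every blocking set has at least $\theta_{n-k}$ points; (iii) equality forces the blocking set to be an $(n-k)$-space. Step (i) is immediate from the Grassmann identity: if $S$ is an $(n-k)$-space and $\pi$ a $k$-space, then $\dim(S\cap\pi)\geq(n-k)+k-n=0$, so $S$ meets $\pi$, and $|S|=\theta_{n-k}$.

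For step (ii), I would first prove the auxiliary claim that a point set $X$ in $\pg(m,q)$ with $|X|<\theta_{m-j}$ is disjoint from some $j$-space, by induction on $j$. For $j=0$ this just says that fewer than $\theta_m$ points cannot cover all points. For the inductive step, $|X|<\theta_{m-j}\leq\theta_{m-j+1}$ yields, by induction, a $(j-1)$-space $\pi$ disjoint from $X$; in the quotient geometry $\pg(m,q)/\pi\cong\pg(m-j,q)$ the points of $X$ have at most $|X|<\theta_{m-j}$ images, so some point of the quotient---that is, some $j$-space through $\pi$---avoids $X$. Taking $m=n$ and $j=k$: a blocking set $B$ with $|B|<\theta_{n-k}$ would miss a $k$-space, a contradiction, so $|B|\geq\theta_{n-k}$.

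For step (iii), suppose $B$ is a blocking set with $|B|=\theta_{n-k}$. If $k=0$ then $B=\pg(n,q)$ is an $n$-space, so assume $k\geq1$; then $|B|<\theta_n$, so we may pick a point $P\notin B$. Project from $P$ onto $\pg(n,q)/P\cong\pg(n-1,q)$ and let $B'$ be the image of $B$. The $k$-spaces of $\pg(n,q)$ through $P$ correspond to the $(k-1)$-spaces of the quotient; since every such $k$-space contains a point of $B$, necessarily distinct from $P$, whose image lies in the corresponding $(k-1)$-space, $B'$ is a blocking set with respect to $(k-1)$-spaces in $\pg(n-1,q)$. By step (ii), $|B'|\geq\theta_{n-k}$, while $|B'|\leq|B|=\theta_{n-k}$; hence the projection is injective on $B$, i.e. no line through $P$ contains two points of $B$. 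Letting $P$ range over all points outside $B$, this shows that every line meeting $B$ in two or more points is contained in $B$. A nonempty point set closed under this operation is a projective subspace---one can see this by building a spanning flag of $B$ one point at a time, noting that each new point together with the span of the previous ones forces the next larger subspace into $B$---and since $|B|=\theta_{n-k}$, that subspace is an $(n-k)$-space.

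The most delicate point is the projection argument in step (iii): one has to set up the quotient geometry correctly and verify that the image of a blocking set with respect to $k$-spaces is again a blocking set with respect to $(k-1)$-spaces downstairs. Once that is in place the counting is short, and the only remaining ingredient, that a line-closed set is a subspace, is standard.
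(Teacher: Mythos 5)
Your proof is correct and complete. Note that the paper does not prove this proposition at all: it is quoted as a classical result of Bose and Burton, and the only comment the authors make is that it also follows from Metsch's counting theorem (Result 2.1), which gives a lower bound on the number of $s$-spaces missed by a small point set. Your argument is a genuinely independent, elementary route: the lower bound $|B|\geq\theta_{n-k}$ comes from the inductive quotient argument showing that fewer than $\theta_{n-k}$ points must miss some $k$-space, and the characterisation of equality comes from projecting from an external point to force every secant line into $B$, whence $B$ is a subspace. All the individual steps check out -- the Grassmann computation in (i), the induction and quotient count in (ii), and in (iii) the correspondence between $k$-spaces through $P$ and $(k-1)$-spaces of the quotient, the injectivity of the projection from $|B'|=|B|$, and the standard fact that a line-closed set is a subspace (your flag-building sketch is exactly the right argument: if $U\subseteq B$ is a subspace and $x\in B\setminus U$, every point of $\vspan{U,x}$ lies on a line through $x$ meeting $U$, so $\vspan{U,x}\subseteq B$). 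What your approach buys is a fully self-contained proof from first principles; what the Metsch-based approach buys is a stronger quantitative statement (how many $k$-spaces are missed when $|B|$ is below the threshold), which is what the rest of the paper actually needs.
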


A \emph{duality} of a projective geometry $\pg(n,q)$ is a bijective map $\perp$ on the subspaces of $\pg(n,q)$ that reverses inclusion, with the property that $\dim \pi + \dim \pi^\perp = n-1$ for any subspace $\pi$ (see \cite[pp.\ 78-79, 88]{kissszonyi}). The image of a subspace under a duality is called its \emph{dual}. In particular, the dual of a point is a hyperplane and vice versa.

We combine the notion of a blocking set and its dual to define the following generalised notion of a blocking set.

\begin{df}
Let $B$ be a set of points and hyperplanes in $\pg(n,q)$.
We call $B$ a \emph{blocking set} with respect to $k$-spaces if every $k$-space of $\pg(n,q)$ is incident with at least one element of $B$. That is, for every $k$-space $\kappa$, $B$ contains a point of $\kappa$ or a hyperplane through $\kappa$.
\end{df}

Throughout this work, if $B$ is a blocking set, we will denote the set of points in $B$ by $B_0$ and the set of hyperplanes in $B$ by $B_{n-1}$, hence $B=B_0\cup B_{n-1}$.
If $B$ contains only points, we will emphasise this by calling $B$ a \emph{blocking set of points}.

In \cite[Proposition 7.1]{debeuledhaeseleerihringermannaert}, the authors give a lower bound on the number of variables on which a certain degree 2 Boolean function depends.
The lower bound follows from a lower bound on the size of a blocking set of points and hyperplanes with respect to 4-spaces in $\pg(7,q)$.

We will answer the following more general question.

\begin{que}
 \label{Question}
What are the smallest blocking sets consisting of point and hyperplanes in $\pg(n,q)$ with respect to $k$-spaces?
\end{que}

To answer this question, we introduce the following construction, which generalises an idea of Sam Mattheus.

\begin{constr}
 \label{Constr}
Consider a $(k+1)$-space $\Sigma$ and a $(k-1)$-subspace $\sigma\subset\Sigma$ in $\pg(2k+1,q)$.
Partition the set of $k$-spaces in $\Sigma$ through $\sigma$ into two non-empty sets $K_1$ and $K_2$. Note that $|K_1|+|K_2|=q+1$. Define the sets
\begin{itemize}
 \item $B_0$ as the set of all points that are contained in some $\kappa \in K_1$, but not in $\sigma$, and
 \item $B_{n-1}$ as the set of all hyperplanes that go through some $\kappa \in K_2$, but not through $\Sigma$.
\end{itemize}
Define $B := B_0 \cup B_{n-1}$.
\end{constr}

\begin{lm}
Let $B$ be as in Construction \ref{Constr}.
Then 
\begin{enumerate}[(1)]
 \item $B$ blocks all $k$-spaces of $\pg(2k+1,q)$,
 \item $|B| = (q+1)q^k$, and
 \item the dual of $B$ is also an instance of Construction \ref{Constr}.
\end{enumerate}
\end{lm}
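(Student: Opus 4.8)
The plan is to prove (1) by fixing an arbitrary $k$-space $\mu$ of $\pg(2k+1,q)$ and reducing everything to the trace $\nu := \mu \cap \Sigma$, which is nonempty since $\dim\mu + \dim\Sigma - (2k+1) = 0$. Two reformulations drive the argument. First, $\mu$ contains a point of $B_0$ precisely when $\nu \cap \kappa \not\subseteq \sigma$ for some $\kappa \in K_1$, because any point of $\mu$ lying on such a $\kappa$ automatically lies in $\nu$. Second, $\mu$ lies in a hyperplane of $B_{n-1}$ precisely when $\Sigma \not\subseteq \langle\mu,\kappa\rangle$ for some $\kappa \in K_2$: a hyperplane containing $\langle\mu,\kappa\rangle$ but not $\Sigma$ exists if and only if $\Sigma \not\subseteq \langle\mu,\kappa\rangle$ (if so, pick a point of $\Sigma$ off $\langle\mu,\kappa\rangle$, which forces $\dim\langle\mu,\kappa\rangle \leq 2k$, and take a hyperplane through $\langle\mu,\kappa\rangle$ avoiding that point); and a Grassmann computation shows $\Sigma \subseteq \langle\mu,\kappa\rangle$ is equivalent to $\langle\mu,\kappa\rangle = \langle\mu,\Sigma\rangle$, which (using $\mu\cap\kappa = \nu\cap\kappa$) is equivalent to $\nu \not\subseteq \kappa$. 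So $\mu$ is blocked by $B_{n-1}$ iff some $\kappa \in K_2$ contains $\nu$, and blocked by $B_0$ iff some $\kappa \in K_1$ meets $\nu$ outside $\sigma$.

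With these in hand, the key observation is that $\pi := \langle\nu,\sigma\rangle$ is a subspace of $\Sigma$ containing $\sigma$, so (as $\dim\Sigma - \dim\sigma = 2$) its dimension is $k-1$, $k$, or $k+1$, and I would split into these three cases. If $\dim\pi = k+1$, then $\pi = \Sigma$, no $\kappa$ contains $\nu$, and for every $\kappa$ the space $\nu\cap\kappa$ strictly contains $\nu\cap\sigma$ (a dimension count), hence is not contained in $\sigma$, so any $\kappa \in K_1$ blocks $\mu$ via $B_0$. If $\dim\pi = k-1$, then $\nu \subseteq \sigma$, every $\kappa$ contains $\nu$, and any $\kappa \in K_2$ blocks $\mu$ via $B_{n-1}$. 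In the remaining case $\dim\pi = k$, the space $\pi$ is itself one of the $q+1$ members of $K_1 \cup K_2$: if $\pi \in K_1$, then $\nu \subseteq \pi$ and $\nu \not\subseteq \sigma$, so $\mu \supseteq \nu$ contains a point of $\pi$ off $\sigma$, i.e.\ of $B_0$; if $\pi \in K_2$, then $\nu \subseteq \pi$ gives $\Sigma \not\subseteq \langle\mu,\pi\rangle$, so $\mu$ is blocked by $B_{n-1}$. This exhausts all possibilities. I expect the only real friction to be pinning down the two reformulations exactly — in particular, that a hyperplane missing $\Sigma$ can always be found once $\Sigma \not\subseteq \langle\mu,\kappa\rangle$ — together with tracking which intersection dimensions strictly increase; the rest is routine Grassmann arithmetic. (One could also note that the cases $\dim\pi = k\pm 1$ are dual to one another and $\dim\pi = k$ is self-dual up to swapping $K_1,K_2$, which matches part (3) and would roughly halve the work.)

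For (2), note $|B| = |B_0| + |B_{n-1}|$ since points and hyperplanes are distinct objects. The $q+1$ members of $K_1\cup K_2$ pairwise meet exactly in $\sigma$, each contributes $|\kappa|-|\sigma| = q^k$ points off $\sigma$, and these point sets are pairwise disjoint, so $|B_0| = |K_1|\,q^k$; the same count carried out in the dual space — where the images $\kappa^\perp$ pairwise meet in $\Sigma^\perp$ and each has $q^k$ points off $\Sigma^\perp$ — gives $|B_{n-1}| = |K_2|\,q^k$, and adding yields $(|K_1|+|K_2|)q^k = (q+1)q^k$. For (3), fix a duality $\perp$: then $\Sigma^\perp$ is a $(k-1)$-space inside the $(k+1)$-space $\sigma^\perp$, and $\kappa \mapsto \kappa^\perp$ is a bijection from the pencil of $k$-spaces through $\sigma$ in $\Sigma$ onto the pencil of $k$-spaces through $\Sigma^\perp$ in $\sigma^\perp$. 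I would then check directly that $B^\perp$ is Construction~\ref{Constr} applied to the data $(\sigma^\perp,\ \Sigma^\perp,\ \{\kappa^\perp : \kappa\in K_2\},\ \{\kappa^\perp : \kappa\in K_1\})$: the dual of a point of $B_0$ (on some $\kappa\in K_1$, off $\sigma$) is a hyperplane through $\kappa^\perp$ missing $\sigma^\perp$, and the dual of a hyperplane of $B_{n-1}$ (through some $\kappa\in K_2$, missing $\Sigma$) is a point on $\kappa^\perp$ off $\Sigma^\perp$. The two parts of the new data are nonempty and partition the new pencil because $K_1,K_2$ do, so the new data is admissible, which completes the proof.
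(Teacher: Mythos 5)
Your proof is correct and follows essentially the same route as the paper: both arguments intersect the given $k$-space with $\Sigma$ and split into the same three cases according to how that intersection sits relative to $\sigma$ and the pencil of $k$-spaces through $\sigma$ (your trichotomy on $\dim\langle\nu,\sigma\rangle$ matches the paper's trichotomy on whether $\tau\cap\Sigma$ lies in a member of $K_2$, of $K_1$ but not $\sigma$, or in no member of the pencil), with parts (2) and (3) handled by the same counts and duality observation. The only cosmetic difference is in the case $\langle\nu,\sigma\rangle=\Sigma$, where the paper exhibits a line of $\nu$ disjoint from $\sigma$ meeting every member of the pencil, while you use a dimension count to show $\nu\cap\kappa\not\subseteq\sigma$ for every $\kappa$.
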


\begin{proof}
\begin{enumerate}[(1)]
\item Consider a $k$-space $\tau$. Note that by Grassmann's identity, $\tau \cap \Sigma$ is non-empty. If $\tau \cap \Sigma\subseteq \kappa$ for a $\kappa\in K_{2}$, then at least one of the hyperplanes through $\vspan{\tau,\kappa}$ does not go through $\Sigma$ and hence is contained in $B_{n-1}$; this includes the case of $\tau \cap \Sigma\subseteq \sigma$. If $\tau \cap \Sigma\subseteq \kappa$ for a $\kappa\in K_{1}$, but $\tau \cap \Sigma\not\subseteq \sigma$, then $\tau$ contains a point of $B_{0}$. If $\tau \cap \Sigma$ is not contained in a $k$-space of $K_1\cup K_2$, then $\tau \cap \Sigma$ contains a line $\ell$ disjoint from $\sigma$. Such a line contains a point of $B_0$ since $K_1$ is non-empty. Hence, in each case, $\tau$ is blocked by an element of $B$.

\item We immediately find $|B| = |B_0| + |B_{n-1}| = |K_1|q^k + |K_2| q^k = (q+1)q^k$.
\item This can be seen directly from the description of the construction. \qedhere
\end{enumerate}
\end{proof}

We are now ready to state the result of this paper.

\begin{thm}
 \label{ThmMain}
Let $q$ be a prime power, and let $k,n$ be integers with $0 \leq k < n$. If $q=2$, suppose that $k \notin \set{\frac{n-2}2, \frac n 2}$. Let $B$ be a blocking set with respect to $k$-spaces in $\pg(n,q)$ consisting of points and hyperplanes.
\begin{enumerate}[(1)]
    \item If $k < \frac{n-1}2$, then 
    \(|B| \geq \frac{q^{k+2}-1}{q-1}\), with equality if and only if $B$ consists of all hyperplanes through a fixed $(n-k-2)$-space.
    \item If $k > \frac{n-1}2$, then
    \(
    |B| \geq \frac{q^{n-k+1}-1}{q-1},
    \)
    with equality if and only if $B$ consists of all points in a fixed $(n-k)$-space.
    \item If $k = \frac{n-1}2$, then $|B| \geq (q+1)q^k$, with equality if and only if $B$ is as in Construction \ref{Constr}.
\end{enumerate}
\end{thm}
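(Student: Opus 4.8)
The plan is to exploit duality and an induction on $n$. First note that cases (1) and (2) are dual: a duality $\perp$ sends a blocking set $B=B_0\cup B_{n-1}$ with respect to $k$-spaces to the set $B^\perp=(B_{n-1})^\perp\cup(B_0)^\perp$, which is a blocking set with respect to $(n-k-1)$-spaces of the same size, and it sends the extremal configuration of (1) (all hyperplanes through a fixed $(n-k-2)$-space) to that of (2) (all points of a fixed $(n-k)$-space). Since $k<\frac{n-1}2$ exactly when $n-k-1>\frac{n-1}2$, and the bound $\frac{q^{k+2}-1}{q-1}$ is precisely the case-(2) bound for the parameter $n-k-1$, case (1) follows from case (2). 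So it remains to prove (2) and (3), which I would do simultaneously by induction on $n$ (with small $n$ as base cases).

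For case (2) the engine is \emph{projection from a point}. If $B_0\ne\pg(n,q)$, pick $P\notin B_0$ and project from $P$ onto $\pg(n-1,q)$; one checks that the image of $B_0$ together with the images $\{H/P:H\in B_{n-1},\,P\in H\}$ of the hyperplanes of $B_{n-1}$ through $P$ forms a blocking set with respect to $(k-1)$-spaces in $\pg(n-1,q)$ of size at most $|B_0|+\rho(P)$, where $\rho(P)$ is the number of hyperplanes of $B_{n-1}$ through $P$. When $2k>n$ the reduced problem is still case (2), and the inductive bound $\frac{q^{(n-1)-(k-1)+1}-1}{q-1}=\frac{q^{n-k+1}-1}{q-1}$ is exactly the target; chasing equalities forces $\rho(P)=|B_{n-1}|$ for \emph{every} $P\notin B_0$, which is impossible unless $B_{n-1}=\emptyset$, and then Proposition \ref{BoseBurtonProp} gives the classification. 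The one remaining sub-case of (2) is $n=2k$ (excluded for $q=2$), where the projection only lands in case (3) in $\pg(2k-1,q)$ and its bound $(q+1)q^{k-1}$ is too weak. Here I would argue directly, using that any two $k$-spaces of $\pg(2k,q)$ meet: assuming $|B_0|<\frac{q^{k+1}-1}{q-1}$ there is a $(k-1)$-space $\mu$ disjoint from $B_0$; the $\frac{q^{k+2}-1}{q-1}$ $k$-spaces through $\mu$ are each blocked, either by a point of $B_0$ (lying on a unique one of them) or by one of the $a$ hyperplanes of $B_{n-1}$ through $\mu$ (each covering $\frac{q^{k+1}-1}{q-1}$ of them), so $|B_0|\ge\frac{q^{k+2}-1}{q-1}-a\,\frac{q^{k+1}-1}{q-1}$ while $|B_{n-1}|\ge a$, which settles $a\le q-1$; if \emph{every} admissible $\mu$ has $a\ge q$ one counts the pairs $(\mu,H)$ with $\mu\subseteq H$, $H\in B_{n-1}$, and deduces $|B_{n-1}|$ is already $\ge\frac{q^{k+1}-1}{q-1}$ (this counting step is where $q\ge 3$ is used).

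Case (3) is the crux. If $B_{n-1}=\emptyset$ (or, by the self-duality of this case, $B_0=\emptyset$) then Bose--Burton gives $|B|\ge\frac{q^{k+2}-1}{q-1}>(q+1)q^k$, so assume both parts are nonempty. The key local input is the hyperplane section: for a hyperplane $H\notin B_{n-1}$, the configuration $(B_0\cap H)\cup\{H'\cap H:H'\in B_{n-1}\}$ blocks all $k$-spaces of $H\cong\pg(2k,q)$, which is precisely case (2) in dimension $2k$; by the (inductively available) statement there, $|B_0\cap H|+|\{H'\cap H:H'\in B_{n-1}\}|\ge\frac{q^{k+1}-1}{q-1}$, with equality only in a rigid pure-point configuration inside $H$. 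Since a plain union bound gives only $|B|\ge q^{k+1}+1$, one must recover the extra $q^{k}-1$ forced elements from the \emph{interaction} between $B_0$ and $B_{n-1}$: I would combine the above local inequality over all hyperplanes with its dual (using self-duality to normalise $|B_0|\le|B_{n-1}|$) and with information on how the hyperplanes of $B_{n-1}$ intersect one another, pushing the total up to $(q+1)q^k$. For equality, near-extremality should force all hyperplanes of $B_{n-1}$ through a common $(k-1)$-space $\sigma$ and all points of $B_0$ inside a common $(k+1)$-space $\Sigma\supseteq\sigma$; then reading off which $k$-spaces of $\Sigma$ through $\sigma$ are covered by $B_0$ and which by $B_{n-1}$ reconstructs the partition $K_1\sqcup K_2$ of Construction \ref{Constr}.

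The main obstacle is precisely this last part of case (3): converting the hyperplane-section bound and the self-duality into a global gain of order $q^k$ over the union bound, and then proving the rigidity that pins down $\Sigma$, $\sigma$, and the partition $K_1\sqcup K_2$ — everything else (the duality reduction, the projection induction for $2k>n$, and the direct $n=2k$ argument) is comparatively routine, and the $n=2k$ sub-case of (2) is exactly the extra difficulty that the hypothesis excludes for $q=2$.
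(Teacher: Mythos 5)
Your reduction of (1) to (2) by duality and your projection-from-a-point induction for the sub-case $2k>n$ of (2) are sound, but the proposal has two genuine gaps, one of which is the heart of the theorem. Case (3) is not proven: you correctly observe that a union/averaging bound over hyperplane sections only produces something of order $q^{k+1}$ and that an additional gain of order $q^k$ must come from the interaction of $B_0$ and $B_{n-1}$, but the step that produces this gain --- and the entire rigidity argument identifying Construction \ref{Constr} --- is exactly what you leave as ``the main obstacle''. The paper obtains the bound quite differently: it projects from a $(k-1)$-space $\rho$ skew to $B_0$ onto a complementary $(k+1)$-space and applies Corollary \ref{CrlKlausDual} there to show that $\rho$ lies on at least $q+1-|B_0|/q^k$ hyperplanes of $B_{n-1}$ (Lemma \ref{Lm_ElementaryLemma}); then, taking $B$ minimal, a point $P\in B_0$ and a $k$-space $\kappa$ meeting $B$ only in $P$, the $q^k$ hyperplane-sets attached to the $(k-1)$-subspaces of $\kappa$ avoiding $P$ are pairwise disjoint, giving $|B_{n-1}|\geq q^k(q+1)-|B_0|$ at once. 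The classification then needs several further ingredients (a tangent/secant argument forcing $B_0$ into a $(k+1)$-space, Beutelspacher's Result \ref{ResBeutelspacher}, and a final count), none of which appear in your sketch. Note also that your plan feeds the case-(2) bound in $\pg(2k,q)$ into the hyperplane sections; that bound is exactly the one excluded for $q=2$, while case (3) must be proven for $q=2$ as well, so even the skeleton of your case-(3) argument would not cover all required parameters.

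The base case $n=2k$ of (2) is also broken as written: in $\pg(2k,q)$ a $(k-1)$-space $\mu$ lies on $\theta_k=\frac{q^{k+1}-1}{q-1}$ $k$-spaces (not $\frac{q^{k+2}-1}{q-1}$), and a hyperplane through $\mu$ contains $\theta_{k-1}$ of them (not $\frac{q^{k+1}-1}{q-1}$). With the correct counts the branch ``some admissible $\mu$ has $a\leq q-1$'' yields only $|B|\geq\theta_k-a(\theta_{k-1}-1)$, which already for $a=1$ and $k\geq 2$ is about $q^k+1<\theta_k$; so this branch gives no contradiction and your dichotomy does not close. Since your induction funnels all of case (2) into this base case, the whole of (1)--(2) is left unproven. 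The paper instead handles $k\neq\frac{n-1}{2}$ uniformly and without induction: it applies Corollary \ref{CrlKlausDual} with $d=k+1$, $s=k$ to bound from below the number of $k$-spaces missed by $B_{n-1}$, and the H\'eger--Nagy estimate (Result \ref{ResTamasZoli}) to show a single point blocks fewer than $q^{(n-k-1)(k+1)}$ $k$-spaces, forcing $B_0=\varnothing$; this covers $n=2k$ for $q\geq 3$ in the same stroke.
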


The first two parts of this theorem will be proven in Section \ref{SecNotMiddle}, the last part in Section \ref{SecMiddle}.
First, we will discuss some background on blocking sets of points.

\section{Preliminaries}

The number of $m$-spaces in $\pg(n,q)$ is given by the Gaussian coefficient
\[
 \gauss{n+1}{m+1} := \prod_{i=1}^{m+1} \frac{q^{n-m+i}-1}{q^i-1},
\]
see e.g.\ \cite[Theorem 4.7]{kissszonyi}. In particular, we set $\gauss{a}{b}:=0$ unless $0\leq b\leq a$. We will also use the notation
\[
 \theta_m := \gauss{m+1}{1} = \frac{q^{m+1}-1}{q-1} = q^m + q^{m-1} + \dots + q + 1
\]
for the number of points in $\pg(m,q)$.

An important result related to blocking sets is the following theorem by Metsch \cite{metsch2006}.

\begin{res}[{\cite[Theorem 1.2]{metsch2006}}]\label{ResKlaus}
Let $d$, $s$ and $n$ be integers with $d, s \geq 0$ and $n \geq d+s$, and consider a point set $B$ in $\pg(n,q)$.
If $|B| \leq \theta_d$, then the number of $s$-spaces not containing a point of $B$ is at least
 \[
    q^{(s+1)(d+1)} \gauss{n-d}{s+1} + ( \theta_d - |B| ) q^{sd} \gauss{n-d}s.
 \]
\end{res}

The result by Metsch will be crucial to prove Theorem \ref{ThmMain} when $k \neq \frac{n-1}2$.
By applying the principle of duality, this theorem has the following corollary.

\begin{crl}\label{CrlKlausDual}
Let $d$, $s$ and $n$ be integers with $d \geq 0$ and $d-1\leq s < n$, and consider a set $B$ of hyperplanes in $\pg(n,q)$. If $|B| \leq \theta_{d}$, then the number of $s$-spaces not contained in an element of $B$ is at least
\[
    q^{(n-s)(d+1)} \gauss{n-d}{n-s} + (\theta_{d}-|B|)q^{(n-s-1)d} \gauss{n-d}{n-s-1}.
\]
\end{crl}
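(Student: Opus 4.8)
The plan is to deduce this directly from Result~\ref{ResKlaus} by applying a duality $\perp$ of $\pg(n,q)$. First I would set $B^\perp := \set{H^\perp : H \in B}$; since every element of $B$ is a hyperplane, $B^\perp$ is a set of points, and $|B^\perp| = |B| \leq \theta_d$. The crucial observation is that for an $s$-space $\pi$ one has $\pi \subseteq H$ if and only if $H^\perp \in \pi^\perp$, so $\pi$ is contained in no element of $B$ precisely when $\pi^\perp$ contains no point of $B^\perp$. As $\perp$ restricts to a bijection between the $s$-spaces and the $(n-1-s)$-spaces of $\pg(n,q)$, the number of $s$-spaces not contained in an element of $B$ equals the number of $(n-1-s)$-spaces disjoint from $B^\perp$.

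Next I would invoke Result~\ref{ResKlaus} for the point set $B^\perp$, with the same value of $d$ and with $s$ replaced by $s' := n-1-s$. This requires checking the hypotheses: $d \geq 0$ is assumed; $s' = n-1-s \geq 0$ follows from $s < n$; and $n \geq d + s'$, i.e.\ $n \geq d + n - 1 - s$, is equivalent to $s \geq d-1$, which is also assumed. Result~\ref{ResKlaus} then yields that the number of $(n-1-s)$-spaces avoiding $B^\perp$ is at least
\[
 q^{(s'+1)(d+1)}\gauss{n-d}{s'+1} + (\theta_d - |B^\perp|)\, q^{s'd}\gauss{n-d}{s'}.
\]
Finally I would substitute $s'+1 = n-s$ and $|B^\perp| = |B|$ and read off exactly the claimed bound; no Gaussian-coefficient identities beyond the definition are needed, since the indices already occur in the form $n-s$ and $n-s-1$.

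There is no deep obstacle here: the content is entirely in Result~\ref{ResKlaus}, and the only thing demanding care is the bookkeeping — ensuring the substitution $s \mapsto n-1-s$ is carried through consistently in both the hypotheses and the conclusion, and that the incidence reversal ``$s$-space inside a hyperplane'' $\leftrightarrow$ ``point inside an $(n-1-s)$-space'' is stated in the right direction. One may also remark that the conclusion is independent of the chosen duality $\perp$, as any two dualities differ by a collineation, which preserves all the relevant counts.
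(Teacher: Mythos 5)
Your proof is correct and is exactly what the paper intends: the corollary is stated as following from Result~\ref{ResKlaus} ``by applying the principle of duality'', and your dualisation with $s' = n-1-s$, together with the verification that $s' \geq 0$ and $n \geq d+s'$ translate into $s < n$ and $s \geq d-1$, is precisely that argument carried out in full.
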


The next result by Héger and Nagy gives an upper bound on a Gaussian binomial.

\begin{res}[{\cite[Lemma $2.2$]{hegernagy2022}}]\label{ResTamasZoli}
With \( e \) being Euler's number, we have:
    \begin{align*}
     \gauss n k < \begin{cases}
      q^{(n-k)k}\cdot e^\frac{1}{q-2} & \text{if } q > 2, \\
      2^{(n-k)k+1}\cdot e^\frac{2}{3} & \text{if } q = 2.
     \end{cases}
    \end{align*}
\end{res}

We would like to remark that Result \ref{ResKlaus} implies the result by Bose and Burton (see Proposition \ref{BoseBurtonProp}).
The latter was strengthened by Bruen \cite{bruen71} for blocking sets of points with respect to lines in $\pg(2,q)$.
This in turn was generalised by Beutelspacher \cite{beutelspacher80}.

\begin{res}[{\cite{beutelspacher80}}]
 \label{ResBeutelspacher}
Let $B$ be a blocking set of points with respect to $k$-spaces in $\pg(n,q)$.
Then either
\begin{enumerate}[(1)]
 \item $B$ contains all points of a fixed $(n-k)$-space and
 \(
  |B| \geq \theta_{n-k},\text{ or}
 \)
 \item $B$ does not contain all points of a fixed $(n-k)$-space and
 \(
 |B| \geq \theta_{n-k} + q^{n-k-1} \sqrt q.
 \)
\end{enumerate}
\end{res}

\section{The case \(\displaystyle k\neq\frac{n-1}2\)}
 \label{SecNotMiddle}

\begin{thm}
    Let $k\neq\frac{n-1}2$ and, if $q=2$, suppose that $k \notin \set{\frac{n-2}2, \frac n 2}$.
    Let $B$ be a blocking set with respect to $k$-spaces in $\pg(n,q)$ consisting of points and hyperplanes.
    Then $|B|\geq\min\set{\theta_{k+1},\theta_{n-k}}$ with equality if and only if
    \begin{enumerate}[(1)]
        \item $k < \frac{n-1}2$ and $B$ consists of all hyperplanes through a fixed $(n-k-2)$-space, so $|B|=\theta_{k+1}$, or
        \item $k > \frac{n-1}2$ and $B$ consists of all points in a fixed $(n-k)$-space, so $|B|=\theta_{n-k}$.
    \end{enumerate}
\end{thm}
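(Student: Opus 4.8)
The plan is to prove the statement for $k<\frac{n-1}2$ and obtain the case $k>\frac{n-1}2$ by applying a duality of $\pg(n,q)$: such a duality sends a blocking set of points and hyperplanes with respect to $k$-spaces to one of the same cardinality with respect to $(n-k-1)$-spaces, it maps the point set of an $(n-k)$-space to the set of all hyperplanes through a fixed $(k-1)$-space, and it fixes the excluded pair $\set{\frac{n-2}2,\frac n2}$ setwise; since $k>\frac{n-1}2$ is equivalent to $n-k-1<\frac{n-1}2$, the two cases are genuinely dual. So assume $k<\frac{n-1}2$, equivalently $n-2k-1\geq 1$; moreover $n-2k-2\geq 1$ when $q=2$, since then $k\neq\frac{n-2}2$. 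In particular $\theta_{k+1}<\theta_{n-k}$. Write $B=B_0\cup B_{n-1}$, $b_0=|B_0|$, $b_{n-1}=|B_{n-1}|$.

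If $b_{n-1}>\theta_{k+1}$ then already $|B|>\theta_{k+1}$, so we may assume $b_{n-1}\leq\theta_{k+1}$. Since $B$ blocks all $k$-spaces, every $k$-space not contained in a hyperplane of $B_{n-1}$ must contain a point of $B_0$. I would apply Corollary \ref{CrlKlausDual} to $B_{n-1}$ with $s=k$ and $d=k+1$ (the hypotheses $d\geq 0$, $d-1\leq s<n$, $b_{n-1}\leq\theta_d$ all hold); since $\gauss{n-k-1}{n-k}=0$ and $\gauss{n-k-1}{n-k-1}=1$, the number of $k$-spaces not contained in a hyperplane of $B_{n-1}$ is at least $(\theta_{k+1}-b_{n-1})\,q^{(n-k-1)(k+1)}$. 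As each point lies on exactly $\gauss nk$ $k$-spaces, this forces
\[
 b_0\cdot\gauss nk\ \geq\ (\theta_{k+1}-b_{n-1})\,q^{(n-k-1)(k+1)}.
\]

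The crux is to show $r:=q^{(n-k-1)(k+1)}/\gauss nk>1$. Using the identity $(n-k-1)(k+1)-k(n-k)=n-2k-1$ together with the upper bound for $\gauss nk$ in Result \ref{ResTamasZoli}, one gets $r>q^{n-2k-1}/e^{1/(q-2)}$ if $q>2$ and $r>2^{n-2k-2}/e^{2/3}$ if $q=2$; since $n-2k-1\geq 1$ in all admitted cases (and $n-2k-2\geq 1$ when $q=2$), a direct numerical check gives $r>1$. Then $b_0\geq r(\theta_{k+1}-b_{n-1})$, so $|B|=b_0+b_{n-1}\geq r\theta_{k+1}-(r-1)b_{n-1}$, which is strictly decreasing in $b_{n-1}$ on $[0,\theta_{k+1}]$; its minimum value $\theta_{k+1}$ is attained only at $b_{n-1}=\theta_{k+1}$, and equality $|B|=\theta_{k+1}$ then additionally forces $b_0=0$.

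It remains to identify the equality case: if $b_0=0$ and $b_{n-1}=\theta_{k+1}$, then $B=B_{n-1}$ blocks all $k$-spaces, so its dual is a set of $\theta_{k+1}$ points blocking all $(n-k-1)$-spaces, which by Proposition \ref{BoseBurtonProp} must be the point set of a $(k+1)$-space; hence $B$ consists of all hyperplanes through a fixed $(n-k-2)$-space. Conversely, such a set has size $\theta_{k+1}$ and is a blocking set: for any $k$-space $\kappa$ and the fixed $(n-k-2)$-space $\mu$, $\dim\vspan{\kappa,\mu}\leq n-1$, so some hyperplane contains both. I expect the main obstacle to be the estimate $r>1$ — that is, controlling $\gauss nk$ sharply enough (which is precisely why Result \ref{ResTamasZoli} is needed) and handling the borderline $q=2$ situations, where the excluded values of $k$ enter.
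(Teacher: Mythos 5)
Your proposal is correct and follows essentially the same route as the paper: reduce to $k<\frac{n-1}2$ by duality, apply Corollary \ref{CrlKlausDual} with $d=k+1$, $s=k$ to lower-bound the number of $k$-spaces that $B_{n-1}$ fails to block, and use Result \ref{ResTamasZoli} together with the exponent identity $(n-k-1)(k+1)-(n-k)k=n-2k-1$ to show a single point blocks fewer than $q^{(n-k-1)(k+1)}$ $k$-spaces, forcing $B_0=\varnothing$ and leaving Bose--Burton (dualised) to settle the bound and the equality case. The only difference is cosmetic: you package the endgame as minimising $r\theta_{k+1}-(r-1)b_{n-1}$ over $b_{n-1}\in[0,\theta_{k+1}]$, whereas the paper assumes $|B|\leq\theta_{k+1}$ and derives a direct contradiction with $B_0\neq\varnothing$.
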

\begin{proof}
    Recall that $B_0$ and $B_{n-1}$ are the set of points and the set of hyperplanes in $B$, respectively. We may suppose that $k<\frac{n-1}{2}$ as (2) follows from (1) using duality. Moreover, we assume that $|B|\leq\min\set{\theta_{k+1},\theta_{n-k}}=\theta_{k+1}$ and will prove that $B_0$ is empty; (the dual of) Proposition \ref{BoseBurtonProp} then finishes the proof.
    As $|B_{n-1}|\leq|B|\leq\theta_{k+1}$, we can apply Corollary \ref{CrlKlausDual} (with $d:=k+1$ and $s:=k$) to obtain that $B_{n-1}$ does not block at least
    \[
        (\theta_{k+1}-|B_{n-1}|)q^{(n-k-1)(k+1)}\geq|B_0|q^{(n-k-1)(k+1)}
    \]
    $k$-spaces, which means that the points in $B_0$ have to block at least $|B_0|q^{(n-k-1)(k+1)}$ $k$-spaces.
    Any point of $\pg(n,q)$ is incident with $\gauss{n}{k}$ $k$-spaces.
    If $q > 2$, Result \ref{ResTamasZoli} implies
    \[
        \gauss{n}{k}<q^{(n-k)k}\cdot e^\frac{1}{q-2}=q^{(n-k-1)(k+1)}\frac{e^\frac{1}{q-2}}{q^{n-2k-1}}\leq q^{(n-k-1)(k+1)},
    \]
    where we used the fact that $e^\frac{1}{q-2}\leq q^{n-2k-1}$ if and only if $1\leq(q-2)(n-2k-1)\ln(q)$, the latter of which is always true as $q\geq3$ and $k\leq\frac{n-2}{2}$.
    If $q=2$, any point of $\pg(n,q)$ is incident with at most
    \[
        \gaussTwo{n}{k}<2^{(n-k)k+1}\cdot e^\frac{2}{3}=2^{(n-k-1)(k+1)}\frac{e^\frac{2}{3}}{2^{n-2k-2}}\leq 2^{(n-k-1)(k+1)}
    \]
    $k$-spaces, as $e^\frac{2}{3}\leq 2^{n-2k-2}$ if and only if $\frac{2}{3\ln(2)}\leq n-2k-2$, the latter of which is true if $k\leq\frac{n-3}{2}$.
    In conclusion, we obtain that any point is incident with less than $q^{(n-k-1)(k+1)}$ $k$-spaces, which contradicts the statement that the points in $B_0$ have to block at least $|B_0|q^{(n-k-1)(k+1)}$ $k$-spaces unless $B_0$ is empty.
\end{proof}

\begin{rmk}
    It remains open whether the same holds for the case of $n$ even, $k\in\set{\frac{n-2}{2},\frac{n}{2}}$, $q=2$.
    
    However, it is not that surprising that this case does not submit itself to mere approximations.
    For $k=\frac{n}{2}$ and $q=2$, we can define $B_0$ to be the set of points in an $\frac{n}{2}$-space $\Sigma$ except for the points in some $(\frac{n}{2}-1)$-space $\kappa\subset\Sigma$, and $B_{n-1}$ to be the set of hyperplanes through $\kappa$ not containing $\Sigma$ (i.e.\ similar to Construction \ref{Constr} with $\Sigma$ an $\frac n2$-space and $\sigma$ an $\left(\frac n2 - 2 \right)$-space in $\pg(n,q)$, whereby $\sigma\subset\kappa$, $|K_1|=2$ and $|K_2|=1$). The set $B=B_0\cup B_{n-1}$ blocks all $\frac{n}{2}$-spaces and has size $2^{\frac{n}{2}+1}$, while a trivial blocking set consisting of all points in an $\frac{n}{2}$-space has size $2^{\frac{n}{2}+1}-1$.
    
    A similar observation exists for the dual case $k=\frac{n-2}{2}$.
\end{rmk}

\section{The case $\displaystyle k = \frac{n-1}2$}
 \label{SecMiddle}

Assume that $n=2k+1\geq 3$ and consider a blocking set $B=B_0 \cup B_{n-1}$ in $\pg(n,q)$ with respect to $k$-spaces.

\begin{lm}\label{Lm_ElementaryLemma}
Suppose that $\rho$ is a $(k-1)$-space skew to $B_0$.
Then $\rho$ lies in at least $q+1-\frac{|B_0|}{q^k}$ hyperplanes of $B_{n-1}$.
If equality holds, then 
\begin{enumerate}[(1)]
    \item every $k$-space through $\rho$ contains at most one point of $B_0$, and
    \item $|B_0|$ is a multiple of $q^k$.
\end{enumerate}
\end{lm}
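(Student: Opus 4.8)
The plan is to pass to the quotient geometry $\pg(n,q)/\rho$. Since $\dim\rho = k-1$ and $n = 2k+1$, this quotient is a $\pg(k+1,q)$, and under it the $k$-spaces of $\pg(n,q)$ through $\rho$ become the points of $\pg(k+1,q)$, while the hyperplanes of $\pg(n,q)$ through $\rho$ become the hyperplanes of $\pg(k+1,q)$. Let $h$ be the number of hyperplanes of $B_{n-1}$ through $\rho$; these yield $h$ hyperplanes of $\pg(k+1,q)$. The first point I would record is that if a $k$-space $\kappa$ with $\rho\subseteq\kappa$ satisfies $\kappa\subseteq H$ for some $H\in B_{n-1}$, then necessarily $\rho\subseteq H$, so $H$ is among those $h$ hyperplanes. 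Hence a $k$-space through $\rho$ is covered by $B_{n-1}$ exactly when its image point in $\pg(k+1,q)$ lies on the union of those $h$ hyperplanes; I call a $k$-space through $\rho$ \emph{bad} if it is not covered, and note that --- $B$ being a blocking set --- every bad $k$-space must contain a point of $B_0$.

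Then I would carry out two elementary counts. First: a union of $h$ hyperplanes of $\pg(k+1,q)$ contains at most $\theta_k + (h-1)q^k$ points, since one of them has $\theta_k$ points and every further one meets a fixed one in a $(k-1)$-space, hence contributes at most $q^k$ new points. Therefore at least
\[
 \theta_{k+1} - \theta_k - (h-1)q^k = q^{k+1} - (h-1)q^k = q^k(q+1-h)
\]
of the $\theta_{k+1}$ points of $\pg(k+1,q)$ are uncovered, i.e.\ there are at least $q^k(q+1-h)$ bad $k$-spaces through $\rho$. Second: because $\rho$ is skew to $B_0$, every $P\in B_0$ lies in exactly one $k$-space through $\rho$, namely $\vspan{\rho,P}$, so the number of $k$-spaces through $\rho$ meeting $B_0$ --- in particular the number of bad ones --- is at most $|B_0|$. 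Chaining the two bounds gives $q^k(q+1-h)\leq|B_0|$, hence $h\geq q+1-|B_0|/q^k$, which is the claimed inequality. (If $h\geq q+1$ this is vacuous, so nothing delicate happens there.)

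For the equality statement, $h = q+1-|B_0|/q^k$ means $|B_0| = q^k(q+1-h)$, which forces the chain
\[
 q^k(q+1-h) \leq \#\{\text{bad }k\text{-spaces through }\rho\} \leq \#\{k\text{-spaces through }\rho\text{ meeting }B_0\} \leq |B_0|
\]
to be a chain of equalities. In particular $\#\{k\text{-spaces through }\rho\text{ meeting }B_0\} = |B_0|$; since $P\mapsto\vspan{\rho,P}$ maps $B_0$ onto the set of $k$-spaces through $\rho$ meeting $B_0$, it must then be injective, i.e.\ every $k$-space through $\rho$ contains at most one point of $B_0$ --- this is (1). Finally, $|B_0| = q^k(q+1-h)$ with $h$ a non-negative integer shows that $|B_0|$ is a (non-negative) multiple of $q^k$ --- this is (2).

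Honestly there is no serious obstacle in this lemma; the whole argument is bookkeeping once the quotient picture is set up. The two places I would be most careful are verifying the bound on the size of a union of $h$ hyperplanes in $\pg(k+1,q)$ and using it in the correct direction, and, in the equality analysis, keeping track of which of the nested inequalities produces conclusion (1) versus conclusion (2). I would also quickly sanity-check the degenerate configurations $B_0 = \emptyset$ (which should force $h\geq q+1$) and $h > q+1$.
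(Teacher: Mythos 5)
Your proposal is correct and follows essentially the same route as the paper: both pass to the quotient $\pg(k+1,q)$ over $\rho$ (the paper realises it as a projection onto a complementary $(k+1)$-space) and bound the number of points left uncovered by the $h$ hyperplane images, with the equality analysis yielding injectivity of $P\mapsto\vspan{\rho,P}$ for (1) and integrality of $h$ for (2). The only cosmetic difference is that you prove the bound $q^k(q+1-h)$ on uncovered points by a direct union-of-hyperplanes count, whereas the paper cites Corollary~\ref{CrlKlausDual} with $d=1$, $s=0$, which reduces to exactly the same estimate.
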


\begin{proof}
We project from $\rho$ as follows: we take a $(k+1)$-space $\kappa$ skew to $\rho$ and define
\begin{align*}
 B_0' = \sett{ \vspan{\rho,P} \cap \kappa }{P \in B_0},&&
 B_{n-1}' = \sett{ \pi \cap \kappa}{\rho \subset \pi \in B_{n-1}}.
\end{align*}
Then $B_0' \cup B_{n-1}'$ needs to block all the points in $\kappa$, since $B_0 \cup B_{n-1}$ blocks all $k$-spaces through $\rho$.
We may assume that $|B_{n-1}'|\leq q+1$ since otherwise the lemma is trivially fulfilled.
$B_0'$ needs to contain all points of $\kappa$ not covered by the $k$-spaces in $B_{n-1}'$.
Applying Corollary \ref{CrlKlausDual} in $\kappa\cong\pg(k+1,q)$ (with $d:=1$ and $s:=0$) we find that 
\[
    |B_{0}'|\geq (q+1-|B_{n-1}'|)q^{k}.
\]
This implies that
\[
 |B_{n-1}'| \geq q+1 -\frac{|B_0'|}{q^k}
 \geq q+1 - \frac{|B_0|}{q^k}.
\]
If this bound is tight, then $|B_0'| = |B_0|$, which means that $\vspan{P,\rho} \neq \vspan{R,\rho}$ for any two points $P$ and $R$ of $B_0$.
Thus, no $k$-space through $\rho$ contains more than one point of $B_0$.
Obviously, equality also implies that $q+1-\frac{|B_0|}{q^k}$ is integer, so $q^k$ must divide $|B_0|$.
\end{proof}

\begin{lm}\label{Lm_ElementaryTheorem}
\(
 |B| \geq q^k(q+1).
\)
If equality holds, then the following is true.
\begin{enumerate}[(1)]
 \item Let $\rho$ be a $(k-1)$-space skew to $B_0$.
 If there is a $k$-space through $\rho$ that is incident with exactly one point of $B_0$ and no hyperplane of $B_{n-1}$, then every $k$-space through $\rho$ contains at most one point of $B_0$.
 \item No hyperplane of $B_{n-1}$ contains a point of $B_0$.
 \item $|B_0|$ is a multiple of $q^k$.
\end{enumerate}
\end{lm}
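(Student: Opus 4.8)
The plan is to establish the bound $|B| \geq q^k(q+1)$ first, keeping track of every inequality used so that equality forces (1)--(3). Since a duality of $\pg(2k+1,q)$ fixes the family of $k$-spaces and interchanges points with hyperplanes, the hypotheses are self-dual, so I may argue up to duality and in particular assume $|B_0| \leq |B_{n-1}|$; the case $|B_0| \geq q^k(q+1)/2$ is then trivial. Two extreme situations are immediate. If $B_0$ is a blocking set of points with respect to $(k-1)$-spaces, then Proposition \ref{BoseBurtonProp} gives $|B| \geq |B_0| \geq \theta_{k+2} > q^k(q+1)$, with no equality possible; dually, if every $(k+1)$-space of $\pg(2k+1,q)$ lies in a hyperplane of $B_{n-1}$, the dual of Proposition \ref{BoseBurtonProp} gives $|B| \geq |B_{n-1}| \geq \theta_{k+2} > q^k(q+1)$. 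Hence I may assume there is a $(k-1)$-space skew to $B_0$.

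The heart of the matter is to globalise the local estimate of Lemma \ref{Lm_ElementaryLemma}. By that lemma, every $(k-1)$-space skew to $B_0$ lies in at least $q+1-|B_0|/q^k$ hyperplanes of $B_{n-1}$, a quantity I may assume to be positive (otherwise $|B_0|\geq q^k(q+1)$). I would double count the incident pairs $(\tau,H)$ with $\tau$ a $(k-1)$-space skew to $B_0$, $H\in B_{n-1}$ and $\tau \subset H$: from below, the number of $(k-1)$-spaces skew to $B_0$ is at least $\gauss{2k+2}{k}-|B_0|\gauss{2k+1}{k-1}$, each lying in at least $q+1-|B_0|/q^k$ such $H$; from above, each $H\in B_{n-1}$ contains at most $\gauss{2k+1}{k}$ such $\tau$, with equality exactly when $H\cap B_0=\varnothing$. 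Comparing the two estimates, together with $|B_0|$ itself, should deliver $|B|\geq q^k(q+1)$.

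I expect the main obstacle to be that the crude form of this count falls just short: a hyperplane of $B_{n-1}$ meeting $B_0$ forfeits some skew $(k-1)$-spaces, and --- more seriously --- a $(k-1)$-space $\tau$ skew to $B_0$ but meeting a secant line of $B_0$ (equivalently, whose projection identifies two points of $B_0$) lies, by the equality clause of Lemma \ref{Lm_ElementaryLemma}, in strictly more than $q+1-|B_0|/q^k$ hyperplanes of $B_{n-1}$, and in the extremal configurations there are many such $\tau$. I therefore anticipate a split on the size of $|B_0|$: for $|B_0|$ comparatively small, a Metsch-type count (Result \ref{ResKlaus} / Corollary \ref{CrlKlausDual}) on the $k$-spaces missed by $B_{n-1}$ forces $|B_{n-1}|$ to be close to $\theta_{k+1}$; for $|B_0|$ in an intermediate range one feeds the equality clause of Lemma \ref{Lm_ElementaryLemma} back into the flag count, estimating from below how many $(k-1)$-spaces skew to $B_0$ meet its secant lines (in the extremal case these are governed by the subspace structure of $B_0$); and for $|B_0|$ large one uses $|B_0|\leq|B_{n-1}|$. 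Making these ranges overlap uniformly in $q$ --- with $q=2$ included, since this case is not excluded here --- is where the real work lies.

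For the equality characterisation, assume $|B|=q^k(q+1)$, so that every inequality above is tight. Then every $(k-1)$-space $\tau$ skew to $B_0$ lies in exactly $q+1-|B_0|/q^k$ hyperplanes of $B_{n-1}$; applying the equality clause of Lemma \ref{Lm_ElementaryLemma} yields both that $q^k\mid|B_0|$ (conclusion (3)) and that every $k$-space through such a $\tau$ contains at most one point of $B_0$, which contains conclusion (1), since its hypothesis is then automatically satisfied. Tightness of the upper bound $|B_{n-1}|\gauss{2k+1}{k}$ in the flag count forces $H\cap B_0=\varnothing$ for every $H\in B_{n-1}$, which is conclusion (2). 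The points that still require care are that the equality information genuinely propagates to all $(k-1)$-spaces skew to $B_0$ (not merely to the one produced above), and, for (1), that a single $k$-space through $\tau$ incident with exactly one point of $B_0$ and no hyperplane of $B_{n-1}$ indeed forces the projection from $\tau$ to be injective on $B_0$.
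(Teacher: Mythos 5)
Your central counting argument does not close, and you essentially say so yourself (``the crude form of this count falls just short'', ``making these ranges overlap \dots is where the real work lies''): as written, this is a plan with the decisive step missing, not a proof. The global double count of flags $(\tau,H)$ provably fails to give the bound. Concretely, take $k=1$, $q=4$, $|B_0|=8$: there are exactly $85-8=77$ points off $B_0$, each on at least $q+1-|B_0|/q=3$ planes of $B_{n-1}$ by Lemma \ref{Lm_ElementaryLemma}, and each plane contains at most $\theta_2=21$ points, so the count yields only $|B_{n-1}|\geq 231/21=11$ and $|B|\geq 19<20=q^k(q+1)$ --- even with all ceilings taken. The structural reason is that a hyperplane contains far too many skew $(k-1)$-spaces ($\gauss{2k+1}{k}\approx q^k\cdot\gauss{2k+1}{k-1}$ of them), so the averaging washes out exactly the margin you need; one can check that $(N-|B_0|a)\epsilon/b \geq q^k\epsilon$ (in your notation, with $\epsilon=q+1-|B_0|/q^k$) already requires $|B_0|\leq 1$. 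Your proposed repairs (a case split on $|B_0|$ with Metsch-type counts at one end and the equality clause of Lemma \ref{Lm_ElementaryLemma} fed back into the flag count in the middle) are not carried out, and it is not at all clear they can be made to work uniformly; moreover your derivation of conclusion (2) hinges on tightness of this very flag count, so it inherits the gap.

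The idea you are missing is a \emph{localisation} that makes every hyperplane of $B_{n-1}$ count at most once. The paper first reduces to a minimal blocking set $B$; then for any $P\in B_0$ minimality produces a $k$-space $\kappa$ incident with $P$ and with no other element of $B$. The $q^k$ hyperplanes of $\kappa$ avoiding $P$ are $(k-1)$-spaces skew to $B_0$, each lying (by Lemma \ref{Lm_ElementaryLemma}) in at least $q+1-|B_0|/q^k$ hyperplanes of $B_{n-1}$; since no hyperplane of $B_{n-1}$ contains $\kappa$, and any hyperplane containing two distinct $(k-1)$-subspaces of $\kappa$ would contain their span $\kappa$, these hyperplanes are pairwise distinct, giving $|B_{n-1}|\geq q^k\bigl(q+1-|B_0|/q^k\bigr)$ exactly. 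All three equality statements then fall out: tightness forces equality in Lemma \ref{Lm_ElementaryLemma} for each such $(k-1)$-space (giving (1) and (3)), and since none of the counted hyperplanes passes through $P$ (such a hyperplane would again contain $\kappa$), equality forces every hyperplane of $B_{n-1}$ to miss every point of $B_0$ (giving (2)). Your opening reductions (self-duality, the Bose--Burton extremes) are correct but do not touch this core difficulty.
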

\begin{proof}
It suffices to prove the lemma in the case that $B$ is a  \emph{minimal} blocking set, i.e.\ no proper subset of $B$ is a blocking set.
If the lemma is true for minimal blocking sets, the size of a non-minimal blocking set is greater than $q^k(q+1)$, and the lemma is true for all blocking sets.

If $B_0 = \varnothing$, then, by (the dual of) Proposition \ref{BoseBurtonProp}, $B_{n-1}$ contains at least $\theta_{k+1} > (q+1)q^k$ hyperplanes hence $|B| > (q+1)q^k$.
So we may assume that $B_0 \neq \varnothing$, and take any point $P \in B_0$.
There exists a $k$-space $\kappa$ that is incident with $P$ and no other element of $B$.
Otherwise, $B \setminus \set P$ would also be a blocking set, contradicting the minimality of $B$.
In $\kappa$ there are $q^k$ distinct $(k-1)$-spaces missing $P$ and, by Lemma \ref{Lm_ElementaryLemma}, each of these $(k-1)$-spaces must be contained in at least $q+1 - \frac{|B_0|}{q^k}$ hyperplanes of $B_{n-1}$.
Since no hyperplane of $B_{n-1}$ contains $\kappa$, no hyperplane is counted twice.
This implies that
\[
 |B_{n-1}| \geq q^k \left( q+1 - \frac{|B_0|}{q^k} \right) \quad\iff\quad |B_0|+|B_{n-1}|\geq q^k \left( q+1 \right).
\]
If equality holds in this bound, then equality holds for the bound described in Lemma \ref{Lm_ElementaryLemma}, for every $(k-1)$-space of $\kappa$ that misses $P$. Therefore, by that same lemma, no $(k-1)$-space $\rho \subset \kappa$ that misses $P$ lies in a $k$-space containing more than 1 point of $B_0$, and $|B_0|$ is a multiple of $q^k$. Moreover, in the above argument, we did not count any hyperplane through $P$. Thus, if equality holds, no point of $B_0$ lies in a hyperplane of $B_{n-1}$.
\end{proof}

\begin{df}
Let $S$ be a set of points in $\pg(n,q)$.
We call a line $\ell$ \emph{skew, tangent, or secant} to $S$ if $\ell$ intersects $S$ in respectively 0, 1, or at least 2 points.
\end{df}

\begin{lm}
 \label{LmOnlyTangentToB0}
 If $|B| = (q+1)q^k$, then no point $P \notin B_0$ lies on both tangent and secant lines to $B_0$.
\end{lm}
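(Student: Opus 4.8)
The plan is to assume $|B|=(q+1)q^k$ (so $n=2k+1$ and $k\geq 1$) and argue by contradiction: suppose some point $P\notin B_0$ lies on a tangent line $t$ and a secant line $s$ to $B_0$. Write $t\cap B_0=\set{R}$ and choose two distinct points $R_1,R_2\in s\cap B_0$. (If $B_0=\varnothing$ there are no such lines and nothing to prove.) The strategy is to build a $(k-1)$-space $\rho$ through $P$ to which Lemma \ref{Lm_ElementaryTheorem}(1) applies, and then let the secant line $s$ violate its conclusion.

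First I would produce a $k$-space $\mu$ containing the tangent line $t$ with $\mu\cap B_0=\set{R}$. For $k=1$ one simply takes $\mu=t$. For $k\geq 2$ I would project from $t$ onto a complementary $\pg(2k-1,q)$: the image of $B_0\setminus\set R$ consists of at most $|B_0|-1\leq (q+1)q^k-1<\theta_{k+1}$ points, so by Proposition \ref{BoseBurtonProp} it cannot block all $(k-2)$-spaces of $\pg(2k-1,q)$; choosing a $(k-2)$-space skew to this image and pulling it back through $t$ yields a $k$-space $\mu\supseteq t$ with $\mu\cap B_0\subseteq\set R$, hence $\mu\cap B_0=\set R$. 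Since no hyperplane of $B_{n-1}$ contains $R$ (Lemma \ref{Lm_ElementaryTheorem}(2)), no hyperplane of $B_{n-1}$ contains $\mu$.

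Next, inside $\mu\cong\pg(k,q)$ I would pick a $(k-1)$-space $\rho$ with $P\in\rho$ and $R\notin\rho$; there are $\theta_{k-1}-\theta_{k-2}=q^{k-1}\geq 1$ such subspaces. Then $\rho\cap B_0\subseteq\mu\cap B_0=\set R$ while $R\notin\rho$, so $\rho$ is skew to $B_0$; moreover $\mu=\vspan{\rho,R}$ is a $k$-space through $\rho$ incident with exactly one point of $B_0$ and with no hyperplane of $B_{n-1}$. Lemma \ref{Lm_ElementaryTheorem}(1) now tells us that \emph{every} $k$-space through $\rho$ contains at most one point of $B_0$. But $P$ lies on both $\rho$ and $s$, and $s\not\subseteq\rho$ because $s$ meets $B_0$ whereas $\rho$ does not; hence $\vspan{\rho,s}=\vspan{\rho,R_1}$ is a $k$-space through $\rho$, and it also contains $R_2\in s$ — two distinct points of $B_0$ — which is the desired contradiction.

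The hard part is the construction of $\mu$: a naive union-bound estimate of the number of $k$-spaces through $t$ that avoid $B_0\setminus\set R$ is already too weak for $k\geq 3$, so one genuinely needs to pass to the quotient by $t$ and invoke the exact Bose–Burton bound there. Everything else is routine bookkeeping with Grassmann's identity and Lemma \ref{Lm_ElementaryTheorem}.
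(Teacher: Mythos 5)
Your proposal is correct and follows essentially the same route as the paper's proof: project $B_0$ from the tangent line onto a complementary $(2k-1)$-space, use Proposition \ref{BoseBurtonProp} to find a $(k-2)$-space avoiding the projection, lift it to a $k$-space meeting $B_0$ in one point and (by Lemma \ref{Lm_ElementaryTheorem}(2)) lying in no hyperplane of $B_{n-1}$, then choose $\rho$ through $P$ inside it and let the secant line contradict Lemma \ref{Lm_ElementaryTheorem}(1). The only differences are cosmetic (naming the secant and its two points up front, and counting the available choices of $\rho$ explicitly).
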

\begin{proof}
 Suppose that $P \notin B_0$ lies on a tangent line $\ell$ to $B_0$.
 First, we prove that $\ell$ lies in some $k$-space which contains only one point of $B_0$.
 If $k=1$, $\ell$ itself is a $k$-space, and we are done, so suppose that $k > 1$.
 Take an $(n-2)$-space $\Pi$ disjoint to $\ell$.
 Consider the projection of $B_0 \setminus \ell$ from $\ell$ onto $\Pi$, i.e.\ the point set
 \[
  B_0' := \sett{\vspan{\ell,R} \cap \Pi}{R \in B_0 \setminus \ell}.
 \]
 Then $|B_0'| < (q+1)q^k$. Therefore, by Proposition \ref{BoseBurtonProp}, $B_0'$ cannot block all $(k-2)$-spaces of $\Pi$. Hence, we can take a $(k-2)$-space $\tau$ in $\Pi$, skew to $B_0'$.
 Then $\kappa:=\vspan{\tau,\ell}$ is a $k$-space through $\ell$, containing only one point of $B_0$, and hence is not contained in a hyperplane of $B_{n-1}$ by Lemma \ref{Lm_ElementaryTheorem}(2).
 Now suppose that some line $\ell_2$ through $P$ contains at least two points of $B_0$.
 Take a $(k-1)$-space $\rho$ in $\kappa$ through $P$, missing $B_0$.
 Then $\vspan{\rho,\ell_2}$ is a $k$-space through $\rho$ containing at least two points of $B_0$, contradicting Lemma \ref{Lm_ElementaryTheorem}(1).
\end{proof}

\begin{lm}
 \label{LmOnlyTangentToS}
Let $S$ be a non-empty point set of $\pg(n,q)$ such that no point $P \notin S$ lies on both tangent and secant lines to $S$.
Then
\[
 \Sigma := S \cup \sett{P \notin S}{P \textnormal{ does not lie on a tangent to } S}
\]
is a subspace of dimension 
\(
 \min \sett{m}{|S| \leq \theta_m}
\).
\end{lm}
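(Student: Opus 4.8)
The plan is to sort the points outside $S$ into two classes, describe $\Sigma$ in those terms, prove $\Sigma$ is closed under taking lines (hence a subspace), identify it with $\vspan S$, and finally read off the dimension.

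First I would note that, since $S\neq\varnothing$, joining a point $P\notin S$ to a point of $S$ shows $P$ lies on a tangent or on a secant to $S$, and the hypothesis forbids both. So I would call $P$ a \emph{secant point} if it lies on a secant (equivalently, on no tangent) and a \emph{tangent point} otherwise, and record that $\Sigma$ is the union of $S$ with the set of secant points, and that through a tangent point every line meets $S$ in at most one point.

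The core of the argument is to show $\Sigma$ is line-closed: for distinct $P,Q\in\Sigma$, the line $PQ$ should lie in $\Sigma$. If $PQ$ is a secant this is immediate, since each of its points is in $S$ or is an external point on the secant $PQ$. If $P$ or $Q$ is a secant point then $PQ$ cannot be a tangent, so the only case requiring work is $P,Q$ both secant points with $\ell:=PQ$ skew to $S$, where I would need to exclude a tangent point $R\in\ell$. Assuming such an $R$, lying on a tangent $t$ with $t\cap S=\{X\}$, I would pass to the plane $\pi:=\vspan{\ell,X}$ and set $T:=S\cap\pi$: the lines of $\pi$ through $P$ that meet $T$ each meet it in $\geq 2$ points (as $P$ is a secant point) and they partition $T$; a short argument with $Q$ shows there are at least two such lines, so one of them — say $m$ — omits some $Y\in T$. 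Then $RY$ is a tangent to $S$ (as $R$ lies on no secant), while $RY\cap m$ is an external point of $S$ lying on the secant $m$ and on the tangent $RY$, contradicting the hypothesis. This planar configuration is the step I expect to be the main obstacle.

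Granting that $\Sigma$ is a subspace, I would finish as follows. It contains $S$, so $\vspan S\subseteq\Sigma$; and any secant point lies on a secant through some $X\in S$ which contains a second point $X'\in S$, so it lies in $\vspan{X,X'}\subseteq\vspan S$; hence $\Sigma=\vspan S$. Writing $d:=\dim\Sigma$, clearly $|S|\leq|\Sigma|=\theta_d$. For the matching lower bound I would use that $\Sigma\setminus S$ consists of secant points, so $S$ has no tangent line lying in $\Sigma$ (the non-touching points of such a line would be tangent points inside $\Sigma$); then each of the $\theta_{d-1}$ lines of $\Sigma$ through a fixed point $Z\in S$ contains a further point of $S$, and these are distinct, giving $|S|\geq\theta_{d-1}+1$. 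Thus $\theta_{d-1}<|S|\leq\theta_d$, which is exactly $d=\min\{m:|S|\leq\theta_m\}$.
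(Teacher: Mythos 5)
Your proof is correct and follows essentially the same route as the paper: the heart of both arguments is a planar configuration forcing a point off $S$ onto both a tangent and a secant, and the dimension is pinned down by the same count of secant lines of $\Sigma$ through a fixed point of $S$, giving $\theta_{d-1} < |S| \leq \theta_d$. The only difference is organizational — you verify line-closure directly (with the skew line through two secant points as the hard case), whereas the paper first shows every plane through a point outside $\Sigma$ meets $S$ in a collinear set — but the underlying idea is identical.
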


\begin{proof}
Take a point $P \notin \Sigma$. First, take a plane $\pi$ through $P$ that contains the points $R_1, R_2, R_3 \in S$.
Suppose that these three points are not collinear.
Then $\vspan{R_1,R_2} \cap \vspan{P,R_3}$ is a point distinct from $R_3$.
This point cannot be contained in $S$, because $\vspan{P,R_3}$ must be tangent to $S$ in $R_3$.
But it lies both on the tangent line $\vspan{P,R_3}$ to $S$, and the secant line $\vspan{R_1,R_2}$ to $S$.
This contradicts our assumption on $S$.
Hence, $\pi \cap S$ must be contained in a line.

Now suppose that some line $\ell$ through $P$ contains at least two points $R_1, R_2 \in \Sigma$.
Take a point $R_3 \in S \setminus \ell$
(if $S \subset \ell$, then $S$ contains at most a point, and the lemma is proven).
Then $\vspan{R_1,R_3}$ and $\vspan{R_2,R_3}$ are secant lines to $S$.
But then the intersection of the plane $\vspan{\ell,R_3}$ with $S$ is not contained in a line, contradicting the first part of the proof.
Hence, any point $P \notin \Sigma$ only lies on skew and tangent lines to $\Sigma$.
Therefore, $\Sigma$ is a subspace.

It is clear that 
\(\dim \Sigma \geq \min \sett m {|S| \leq \theta_m} \).
Now suppose that $|S| \leq \theta_m$ for some $m$, but that $\dim \Sigma > m$.
Take a point $P \in S$.
No line through $P$ in $\Sigma$ is tangent to $S$.
Therefore, there exist at least $\theta_m$ lines of $\Sigma$ through $P$ that contain at least one other point of $S$ and
\[
 |S| \geq 1 + \theta_m,
\]
a contradiction.
\end{proof}

\begin{lm}
 \label{LmBP}
Assume that $B_0$ is contained in a $(k+1)$-space $\Sigma$ and consider a point $P \in \Sigma \setminus B_0$.
Define
\[
 B_P := \sett{\pi \in B_{n-1}}{ P \in \pi \not \supseteq \Sigma }.
\]
Then one of the following holds.
\begin{enumerate}[(1)]
 \item There exists a $k$-space $\rho \subset \Sigma$ such that $B_P$ contains all hyperplanes $\pi$ with $\pi \cap \Sigma = \rho$, which implies $|B_P| \geq q^k$.
 \item \( |B_P| \geq q^{k-1}(q+1) \).
\end{enumerate}
\end{lm}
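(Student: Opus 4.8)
We are in $\pg(n,q)$ with $n=2k+1$, a minimal blocking set $B$ of size $(q+1)q^k$, the point set $B_0$ contained in a $(k+1)$-space $\Sigma$, and a point $P\in\Sigma\setminus B_0$. We must show the hyperplanes of $B_{n-1}$ through $P$ that do not contain $\Sigma$ are either ``all hyperplanes meeting $\Sigma$ in a fixed $k$-space $\rho\subset\Sigma$'' (giving $|B_P|\geq q^k$) or number at least $q^{k-1}(q+1)$. The natural approach is to \emph{project from $P$}: choose an $(n-1)$-space $\Pi$ not through $P$, and let $\bar\Sigma=\vspan{P,\Sigma}\cap\Pi$, a $k$-space inside $\Pi$. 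A hyperplane $\pi$ through $P$ projects to the $(n-2)$-space $\pi\cap\Pi$ of $\Pi$, and $\pi\supseteq\Sigma$ exactly when $\pi\cap\Pi\supseteq\bar\Sigma$; so $B_P$ corresponds to a set of $(n-2)$-spaces of $\Pi\cong\pg(2k,q)$ none of which contains the $k$-space $\bar\Sigma$.

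\textbf{The key dichotomy.} The idea is to analyze which $(k-1)$-subspaces of $\bar\Sigma$ arise as $\pi\cap\bar\Sigma$ for $\pi\in B_P$ (equivalently, which $k$-spaces of $\Sigma$ through $P$ are ``missed'' by a hyperplane of $B_P$, noting $\pi\cap\bar\Sigma$ has dimension $k-1$ since $\pi\not\supseteq\Sigma$ while $\dim\bar\Sigma=k$). The first step is to show that for \emph{every} $(k-1)$-space $\rho\subset\Sigma$ with $P\in\rho$ and $\rho$ skew to $B_0$, $\rho$ lies in at least $q+1-\frac{|B_0|}{q^k}$ hyperplanes of $B_{n-1}$, all of which then lie in $B_P$ since $\rho\not\supseteq\Sigma$; this is just Lemma~\ref{Lm_ElementaryLemma} applied with the extra fact (from Lemma~\ref{Lm_ElementaryTheorem}) that $|B_0|$ is a multiple of $q^k$, so $q+1-\frac{|B_0|}{q^k}$ is a genuine positive integer once $|B_0|\leq q^k\cdot q$, i.e.\ once $|B_{n-1}|\geq q^k$. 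One should also extract from Lemma~\ref{LmOnlyTangentToB0}/\ref{Lm_ElementaryTheorem}(1) that, since $P\notin B_0$ and (by minimality, as in the proof of Lemma~\ref{LmOnlyTangentToB0}) $P$ lies on a $k$-space through some $B_0$-skew $(k-1)$-space containing exactly one point of $B_0$, every $k$-space of $\Sigma$ through such a $\rho$ contains at most one point of $B_0$.

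\textbf{Finishing.} Now case on the behaviour of $B_P$ inside $\bar\Sigma$. If \emph{all} hyperplanes of $B_P$ meet $\bar\Sigma$ in the \emph{same} $(k-1)$-space $\bar\rho$, then pulling back, all of them meet $\Sigma$ in the fixed $k$-space $\rho=\vspan{P,\bar\rho}$ inside $\Sigma$ — wait, $\rho$ here is a $k$-space of $\Sigma$ through $P$; I would then argue using a $B_0$-skew $(k-1)$-space of $\kappa\supset$ an appropriate tangent configuration that in fact \emph{all} hyperplanes $\pi$ with $\pi\cap\Sigma=\rho$ must lie in $B_{n-1}$, since every one of the $q$ such hyperplanes must help block the $k$-spaces through the $(k-1)$-spaces of $\Sigma$ that miss $B_0$; this gives case (1) with $|B_P|\geq q^k$. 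Otherwise, the hyperplanes of $B_P$ meet $\bar\Sigma$ in at least two distinct $(k-1)$-spaces of $\bar\Sigma$, say $\bar\rho_1\neq\bar\rho_2$; each $\bar\rho_i$ lifts to a $(k-1)$-space $\rho_i\subset\Sigma$ not through $P$... \emph{this is the subtle point}. The cleaner route: the $(k-1)$-spaces of $\bar\Sigma$ covered by $\pi\cap\bar\Sigma$ for $\pi\in B_P$ — if there are at least two of them, then the $(n-2)$-spaces $\pi\cap\Pi$ of $\Pi$ are in ``general position'' enough that a double count over a pencil of $(k-1)$-spaces of $\bar\Sigma$ (there are $q+1$ of them through the $(k-2)$-space $\bar\rho_1\cap\bar\rho_2$) forces, via Lemma~\ref{Lm_ElementaryLemma} applied to each of those $q+1$ $(k-1)$-spaces, at least $q+1-\frac{|B_0|}{q^k}\geq q+1-\frac{|B_0|}{q^k}$ hyperplanes through each, no hyperplane counted more than once among different members of the pencil (as a hyperplane not containing $\Sigma$ meets $\bar\Sigma$ in a unique $(k-1)$-space hence contains at most one $(k-1)$-space of the pencil), yielding $|B_P|\geq (q+1)\bigl(q+1-\tfrac{|B_0|}{q^k}\bigr)$; combined with $|B_0|+|B_{n-1}|=(q+1)q^k$ and $|B_P|\leq|B_{n-1}|$ this pushes through to $|B_P|\geq q^{k-1}(q+1)$ after a short arithmetic manipulation.

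\textbf{Main obstacle.} The real difficulty is the bookkeeping in case (2): one needs the pencil of $q+1$ many $(k-1)$-spaces of $\bar\Sigma$ to which Lemma~\ref{Lm_ElementaryLemma} applies to \emph{all} be skew to $B_0$ (so choose the axis $(k-2)$-space of the pencil skew to $B_0$, which is possible since $|B_0|<\theta_{k-1}$ would fail — actually $|B_0|$ can be as large as $(q+1)q^k-q^k=q^{k+1}$, so one must instead work with the projection $B_0'$ of $B_0$ inside $\bar\Sigma$ and choose a $(k-2)$-space of $\bar\Sigma$ skew to $B_0'$, which exists because $|B_0'|<(q+1)q^k=\theta_{k+1}-$ lower order, so Bose–Burton forbids it from blocking all $(k-2)$-spaces of the $k$-space $\bar\Sigma$), and then lift carefully. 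Reconciling the ``at most one point of $B_0$ per $k$-space through $\rho$'' with the two-distinct-$(k-1)$-spaces hypothesis, and making the no-double-counting argument airtight when hyperplanes of $B_P$ could a priori contain $P$ in several ways, is where the care is needed; everything else is routine.
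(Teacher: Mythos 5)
Your plan has the right opening move (pass to the quotient geometry at $P$), but it then goes astray in ways that I don't think can be repaired within the strategy you describe. The crucial fact that drives the paper's proof is never isolated in your write-up: the only reason $B_P$ is forced to be large is that every $k$-space meeting $\Sigma$ in \emph{exactly} the point $P$ must be blocked by a hyperplane of $B_P$ (it avoids $B_0\subseteq\Sigma$, and by Grassmann no hyperplane through $\Sigma$ can contain it). Instead, you try to generate elements of $B_P$ by applying Lemma \ref{Lm_ElementaryLemma} to $(k-1)$-spaces inside $\Sigma$ (or to $(k-1)$-spaces of $\bar\Sigma$, which correspond to $k$-spaces of $\Sigma$ through $P$, not to $(k-1)$-spaces of the ambient space at all). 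The hyperplanes produced by Lemma \ref{Lm_ElementaryLemma} through such a $\rho$ need neither avoid $\Sigma$ nor, in your second usage, contain $P$; your justification ``all of which then lie in $B_P$ since $\rho\not\supseteq\Sigma$'' confuses a property of the subspace $\rho$ with the defining property of $B_P$ (that the \emph{hyperplane} not contain $\Sigma$). So these hyperplanes cannot be credited to $B_P$.

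Even granting all of that, the endgame is quantitatively insufficient. A pencil of $q+1$ many $(k-1)$-spaces, each contributing at most $q+1-\tfrac{|B_0|}{q^k}$ hyperplanes with no double counting, yields at best $(q+1)\bigl(q+1-\tfrac{|B_0|}{q^k}\bigr)\leq(q+1)q$, which is of order $q^2$ and falls far short of the target $q^{k-1}(q+1)$ as soon as $k\geq 3$; no ``short arithmetic manipulation'' closes that gap. In addition, your dichotomy (all of $B_P$ meets $\bar\Sigma$ in one $(k-1)$-space versus at least two) is not the dichotomy of the lemma: conclusion (1) requires $B_P$ to contain \emph{all} $q^k$ hyperplanes with $\pi\cap\Sigma=\rho$ (not ``$q$'' of them, as you write), and ``all of $B_P$ meets $\Sigma$ in the same $k$-space'' neither implies nor is implied by that. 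The paper's route is different and much shorter: in the quotient $\pg(2k,q)$ at $P$, the hyperplanes $B_P/P$ must block every $(k-1)$-space skew to $\Sigma/P$; dualising turns $(B_P/P)^\perp\cup(\Sigma/P)^\perp$ into a blocking set of \emph{points} with respect to $k$-spaces in $\pg(2k,q)$, and Beutelspacher's theorem (Result \ref{ResBeutelspacher}) then gives exactly the two alternatives: either the blocking set contains a full $k$-space (yielding case (1) when that $k$-space contains $(\Sigma/P)^\perp$, and $|B_P|\geq\theta_k-\theta_{k-2}=q^{k-1}(q+1)$ otherwise), or it has at least $\theta_k+q^{k-1}\sqrt q$ points, which again gives case (2). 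That appeal to Beutelspacher is the missing idea.
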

\begin{proof}
$B_P$ must block all $k$-spaces intersecting $\Sigma$ exactly in the point $P$, since no hyperplane through $\Sigma$ contains a $k$-space intersecting $\Sigma$ only in $P$.
Consider the quotient geometry through $P$, which is isomorphic to $\pg(n-1,q)$.
Then \(B_P / P := \sett{\pi / P}{\pi \in B_P}\) is a set of hyperplanes in $\pg(n-1,q)$ blocking all $(k-1)$-spaces skew to the $k$-space $\Sigma/P$.
Choose a duality $\perp$ of $\pg(n-1,q)$.
Then \((B_P/P)^\perp := \sett{(\pi/P)^\perp}{\pi \in B_P}\) is a set of points in $\pg(n-1,q)$ blocking all $k$-spaces skew to the $(k-1)$-space $(\Sigma/P)^\perp$.
Therefore, the union of 
\( (B_P/P)^\perp \)
and the points of
\( (\Sigma/P)^\perp \)
is a blocking set with respect to $k$-spaces in $\pg(2k,q)$.

First, suppose that this blocking set contains a $k$-space.
This space is of the form $(\rho/P)^\perp$ for some $k$-space $\rho$ in $\pg(n,q)$ through $P$.
If $(\Sigma/P)^\perp \subset (\rho/P)^\perp$, i.e.\ $\rho \subset \Sigma$, then
\(
 |B_P| \geq \theta_k - \theta_{k-1}.
\)
Otherwise, $(\Sigma/P)^\perp$ and $(\rho/P)^\perp$ intersect in at most a $(k-2)$-space and
\(
 |B_P| \geq \theta_k - \theta_{k-2}.
\)

Now suppose that the blocking set does not contain a $k$-space.
Then, by Result \ref{ResBeutelspacher}, it contains at least $\theta_k + q^{k-1} \sqrt q$ points, which implies that
\(
 |B_P| \geq \theta_k + q^{k-1} \sqrt q - \theta_{k-1}.
\)
\end{proof}

\begin{thm}
$|B| \geq (q+1)q^k$, with equality if and only if $B$ is as in Construction \ref{Constr}.
\end{thm}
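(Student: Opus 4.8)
The inequality $|B|\geq(q+1)q^k$ is exactly Lemma~\ref{Lm_ElementaryTheorem}, and Construction~\ref{Constr} attains it by the opening lemma, so the real content is the converse: if $|B|=(q+1)q^k$ then $B$ is an instance of Construction~\ref{Constr}. As in the proof of Lemma~\ref{Lm_ElementaryTheorem} it suffices to treat minimal $B$. If $B_0=\varnothing$ then $B_{n-1}$ is a blocking set of hyperplanes, so the dual of Proposition~\ref{BoseBurtonProp} gives $|B|\geq\theta_{k+1}=(q+1)q^k+\theta_{k-1}>(q+1)q^k$ since $k\geq1$; dually $B_{n-1}\neq\varnothing$. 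By Lemma~\ref{Lm_ElementaryTheorem}(3) $|B_0|$ is a multiple of $q^k$, and since $1\leq|B_0|\leq(q+1)q^k-|B_{n-1}|<(q+1)q^k$ we may write $|B_0|=jq^k$, $|B_{n-1}|=(q+1-j)q^k$ with $1\leq j\leq q$. Applying Lemmas~\ref{LmOnlyTangentToB0} and~\ref{LmOnlyTangentToS} to $S=B_0$, the set $\hat\Sigma$ obtained from $B_0$ by adjoining every point lying on no tangent to $B_0$ is a subspace of dimension $\min\{m:jq^k\leq\theta_m\}$, which is $k$ if $j=1$ and $k+1$ if $j\geq2$. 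By Lemma~\ref{Lm_ElementaryTheorem}(2) no hyperplane of $B_{n-1}$ contains a point of $B_0$, hence none contains $\hat\Sigma$, so each $\pi\in B_{n-1}$ meets $\hat\Sigma$ in a hyperplane of $\hat\Sigma$, which (again by Lemma~\ref{Lm_ElementaryTheorem}(2)) is disjoint from $B_0$, i.e.\ contained in $C:=\hat\Sigma\setminus B_0$.

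When $j=1$ this finishes quickly. Here $\hat\Sigma$ is a $k$-space and $|C|=\theta_k-q^k=\theta_{k-1}$, exactly the size of a $(k-1)$-space; since each $\pi\in B_{n-1}$ meets $\hat\Sigma$ in a $(k-1)$-space inside $C$, we get $\pi\cap\hat\Sigma=C=:\sigma$ for every $\pi\in B_{n-1}$, so $B_0=\hat\Sigma\setminus\sigma$ and $\sigma\subset\pi\not\supseteq\hat\Sigma$ for all $\pi\in B_{n-1}$. In the quotient $\pg(n,q)/\sigma\cong\pg(k+1,q)$ the hyperplanes of $B_{n-1}$ become $|B_{n-1}|=q^{k+1}$ distinct hyperplanes avoiding the point $\hat\Sigma/\sigma$; as there are only $q^{k+1}$ hyperplanes of $\pg(k+1,q)$ missing a given point, $B_{n-1}$ is precisely the set of all hyperplanes through $\sigma$ not containing $\hat\Sigma$. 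Choosing any $(k+1)$-space $\Sigma\supset\hat\Sigma$, setting $K_1=\{\hat\Sigma\}$ and letting $K_2$ be the remaining $q$ $k$-spaces through $\sigma$ in $\Sigma$, one checks directly that $B$ is Construction~\ref{Constr} for this data.

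The main work is the case $j\geq2$, where $\hat\Sigma$ is a $(k+1)$-space containing $B_0$, so Lemma~\ref{LmBP} applies with $\Sigma=\hat\Sigma$ and $B_P=\{\pi\in B_{n-1}:P\in\pi\}$ for $P\in C$. First I would rule out the degenerate alternative of Lemma~\ref{LmBP}, in which the auxiliary blocking set of $\pg(2k,q)$ contains a $k$-space $(\rho/P)^\perp$ with $\rho\not\subset\hat\Sigma$: it forces every hyperplane through $\rho$ not containing $\hat\Sigma$ into $B_{n-1}$, and combining $\rho$ with a point $Q\in B_0$ produces such a hyperplane through $Q$, contradicting Lemma~\ref{Lm_ElementaryTheorem}(2). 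Hence every $P\in C$ either lies on a $k$-space $\rho\subset\hat\Sigma$ disjoint from $B_0$ all of whose $q^k$ extensions to hyperplanes meeting $\hat\Sigma$ exactly in $\rho$ lie in $B_{n-1}$, or satisfies $|B_P|\geq q^k+q^{k-1}\sqrt q$. Since $C$ is the union of the $k$-spaces $\pi\cap\hat\Sigma$ ($\pi\in B_{n-1}$) — a $k$-space meeting $\hat\Sigma$ in a single point of $C$ is disjoint from $B_0$, hence blocked by some $\pi\in B_{n-1}$ — and $\sum_{P\in C}|B_P|=|B_{n-1}|\theta_k$ by counting the incidences $\{(P,\pi):P\in\pi\cap\hat\Sigma\}$, the numerics $|B_0|=jq^k$, $|C|=(q+1-j)q^k+\theta_{k-1}$ should be pushed to show that the $k$-spaces of $\hat\Sigma$ disjoint from $B_0$ form a pencil through a common $(k-1)$-space $\sigma$ (degenerating to one $k$-space when $j=q$), that exactly $q+1-j$ of them occur and each carries its full family of $q^k$ hyperplanes in $B_{n-1}$, and therefore $B_0=\hat\Sigma\setminus\bigcup_{\kappa'\in K_2}\kappa'=\bigcup_{\kappa\in K_1}(\kappa\setminus\sigma)$; this exhibits $B$ as Construction~\ref{Constr} with $\Sigma=\hat\Sigma$.

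I expect the genuine difficulty to be this last step. The counting estimate from Lemma~\ref{LmBP} has slack $(q-j)q^k\theta_{k-1}$, which one must show is concentrated entirely on the $\theta_{k-1}$ points of $\sigma$; ruling out stray configurations — several disjoint, non-concurrent $k$-spaces, or a $k$-space carrying only part of its hyperplane family — when $j$ is small seems to need a further local analysis inside $\hat\Sigma$, presumably reusing Lemma~\ref{Lm_ElementaryTheorem}(1) and Lemma~\ref{LmOnlyTangentToB0} there rather than the global size bounds alone.
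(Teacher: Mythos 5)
Your proposal correctly reduces to the equality case, correctly places $B_0$ in a subspace via Lemmas \ref{LmOnlyTangentToB0} and \ref{LmOnlyTangentToS}, and your self-contained treatment of the case $|B_0|=q^k$ (where $\hat\Sigma$ is a $k$-space and $C$ is forced to be a single $(k-1)$-space by a size count) is correct and slightly more direct than what the paper does there. Your elimination of the sub-case $\rho\not\subset\hat\Sigma$ in Lemma \ref{LmBP} is also sound. However, for $j\geq 2$ — the main case — you stop at ``the numerics should be pushed to show that the $k$-spaces of $\hat\Sigma$ disjoint from $B_0$ form a pencil through a common $(k-1)$-space,'' and you yourself flag this as the genuine difficulty. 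That is a real gap, not a routine verification: the incidence count $\sum_{P\in C}|B_P|=|B_{n-1}|\theta_k$ together with the pointwise lower bounds from Lemma \ref{LmBP} does not by itself force the $k$-spaces $\pi\cap\hat\Sigma$ to be concurrent in a $(k-1)$-space, nor each to carry its full family of $q^k$ hyperplanes, and you give no argument for either.

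The idea you are missing is the one the paper uses to sidestep this entirely: in the middle dimension $k=\frac{n-1}{2}$ the whole problem is self-dual, so the dual of $B$ is again a blocking set of points and hyperplanes of the same size $(q+1)q^k$. Applying the dual of ``$B_0$ lies in a $(k+1)$-space'' immediately yields a $(k-1)$-space $\sigma$ contained in \emph{every} hyperplane of $B_{n-1}$ — this hands you the pencil structure for free. The paper then only needs Lemma \ref{LmBP} for the single task of ruling out $\sigma\not\subset\Sigma$ (via the count $|B_{n-1}|\theta_k\geq q^{k-1}(q+1)(\theta_{k+1}-|B_0|)$, which contradicts $|B|=(q+1)q^k$), after which a short double count over $S=\{\pi\cap\Sigma:\pi\in B_{n-1}\}$ pins down $|S|=q+1-t$ and $B_0=\Sigma\setminus\bigcup_{\kappa\in S}\kappa$. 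I recommend replacing your unfinished local analysis by this duality step; as written, your proof of the classification in the case $j\geq2$ is incomplete.
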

\begin{proof}
We already proved that $|B| \geq (q+1)q^k$ in Lemma \ref{Lm_ElementaryTheorem}, so suppose that equality holds.
By Lemmas \ref{LmOnlyTangentToB0} and \ref{LmOnlyTangentToS}, $B_0$ lies in a $(k+1)$-space $\Sigma$.
By duality, there must also exist a $(k-1)$-space $\sigma$ that lies in all hyperplanes of $B_{n-1}$.
By Lemma \ref{Lm_ElementaryTheorem}(2), every hyperplane of $B_{n-1}$ intersects $\Sigma$ in a $k$-space skew to $B_0$.

Suppose that $\sigma \not \subset \Sigma$.
Let $n_i$ denote the number of points of $\Sigma \setminus B_0$ that are contained in exactly $i$ hyperplanes of $B_{n-1}$. Given a $k$-space $\rho \subset \Sigma$, there exists a hyperplane which intersects $\Sigma$ in $\rho$, and which does not contain $\sigma$. Therefore, Lemma \ref{LmBP}(1) does not hold, and $n_i = 0$ for all $i < q^{k-1}(q+1)$. This implies that
\begin{align*}
 |B_{n-1}| \theta_k
 = \sum_i i n_i
 \geq q^{k-1}(q+1) \sum_i n_i
 = q^{k-1}(q+1) (\theta_{k+1} - |B_0|).
\end{align*}
Then the above inequality implies that
\begin{align*}
 |B_{n-1}| \theta_k \geq q^{k-1}(q+1) (\theta_{k+1} - |B_0|)
 & = q^{k-1}(q+1) (\theta_{k+1} - (q^{k+1} + q^k - |B_{n-1}|)) \\
 & = q^{k-1}(q+1) (|B_{n-1}| + \theta_{k-1}).
\end{align*}
If $k=1$, this simplifies to
\(
 |B_{n-1}| (q+1) \geq (q+1) (|B_{n-1}| + 1),
\)
a contradiction. If $k > 1$, we can rewrite this inequality as
\[
 |B_{n-1}| \geq q^{k-1} (q+1) \frac{q^k-1}{q^{k-1}-1}
 > q^{k-1}(q+1)q,
\]
which contradicts $|B_0 \cup B_{n-1}| = q^k(q+1)$.

Hence, $\sigma \subset \Sigma$.
We know from Lemma \ref{Lm_ElementaryTheorem}(3) that $|B_0| = t q^k$ for some $t$, and therefore $|B_{n-1}| = (q+1-t) q^k$.
By Lemma \ref{Lm_ElementaryTheorem}(2), every hyperplane of $B_{n-1}$ intersects $\Sigma$ in a $k$-space through $\sigma$.
Consider the set
\(
 S := \sett{\pi \cap \Sigma}{\pi \in B_{n-1}}.
\)
Since each $k$-space $\kappa$ of $\Sigma$ lies in $q^k$ hyperplanes intersecting $\Sigma$ in $\kappa$, $|S| \geq q+1-t$, with equality if and only if $B_{n-1}$ consists of all hyperplanes intersecting $\Sigma$ in a $k$-space of $S$.
On the other hand, by Lemma \ref{Lm_ElementaryTheorem}(2), no point of $B_0$ is contained in $U=\cup_{\kappa\in S}\kappa$, hence
\[
 t q^k = |B_0| \leq | \Sigma \setminus U |
 = \theta_{k+1} - (|S| q^k + \theta_{k-1})
 = (q+1-|S|)q^k.
\]
This implies that \(|S| \leq q+1-t\), with equality if and only if $B_0 = \Sigma \setminus U$.

In conclusion, let $T$ denote the set of the $t$ $k$-spaces in $\Sigma$ through $\sigma$, not contained in $S$.
Then
\begin{itemize}
 \item $B_{n-1}$ consists of all hyperplanes intersecting $\Sigma$ exactly in an element of $S$, and
 \item $B_0$ consists of all points of $\kappa \setminus \sigma$, where $\kappa$ varies over the elements $T$.
\end{itemize}
Thus, $B$ is as in Construction \ref{Constr}.
\end{proof}

\textbf{Acknowledgements.}
This problem was proposed at a brainstorm session by Ferdinand Ihringer. We would like to thank everyone present at the brainstorm session for helpful discussions, especially Sam Mattheus. The first author would also like to thank the Faculty of Mathematics at the University of Rijeka for their hospitality during his stay. This work was partially supported by Croatian Science Foundation under the project 5713.

\bigskip

\begin{center}
\begin{minipage}{.45 \textwidth}
Sam Adriaensen \\
\textit{Vrije Universiteit Brussel} \\
Pleinlaan 2, 1050 Elsene, Belgium \\
\url{sam.adriaensen@vub.be}
\end{minipage}
\begin{minipage}{.45 \textwidth}
Maarten De Boeck \\
\textit{University of Memphis} \\
Dunn Hall, Norriswood Ave, Memphis, TN 38111, United States \\
\url{mdeboeck@memphis.edu}
\end{minipage}

\bigskip

\begin{minipage}{.45 \textwidth}
Lins Denaux \\
\textit{Ghent University} \\
Krijgslaan 281, 9000 Gent, Belgium \\
\url{lins.denaux@ugent.be}
\end{minipage}

\end{center}


\begin{thebibliography}{10}

\bibitem[BB66]{boseburton}
R.~C. Bose and R.~C. Burton.
\newblock A characterization of flat spaces in a finite geometry and the
  uniqueness of the {H}amming and the {M}ac{D}onald codes.
\newblock {\em J. Combinatorial Theory}, 1:96--104, 1966.

\bibitem[BBS12]{Blokhuis_Brouwer:12}
A.~Blokhuis, A.~E. Brouwer, and T.~Sz\H{o}nyi.
\newblock On the chromatic number of {$q$}-{K}neser graphs.
\newblock {\em Des. Codes Cryptogr.}, 65(3):187--197, 2012.

\bibitem[Beu80]{beutelspacher80}
A.~Beutelspacher.
\newblock Blocking sets and partial spreads in finite projective spaces.
\newblock {\em Geom. Dedicata}, 9(4):425--449, 1980.

\bibitem[Bru71]{bruen71}
A.~Bruen.
\newblock Blocking sets in finite projective planes.
\newblock {\em SIAM J. Appl. Math.}, 21:380--392, 1971.

\bibitem[BSS11]{blokhuissziklaiszonyi}
A.~Blokhuis, P.~Sziklai, and T.~Sz{\H o}nyi.
\newblock Blocking {S}ets in {P}rojective {S}paces.
\newblock In J.~De~Beule and L.~Storme, editors, {\em Current {R}esearch
  {T}opics in {G}alois {G}eometries}, chapter~3, pages 61--84. Nova {S}cience
  {P}ublishers, {I}nc., 2011.

\bibitem[DBDIM22]{debeuledhaeseleerihringermannaert}
J.~De~Beule, J.~D'haeseleer, F.~Ihringer, and J.~Mannaert.
\newblock Degree 2 {B}oolean {F}unctions on {G}rassmann {G}raphs, 2022.
\newblock arXiv:2202.03940.

\bibitem[HN22]{hegernagy2022}
T.~H\'{e}ger and Z.~L. Nagy.
\newblock Short minimal codes and covering codes via strong blocking sets in
  projective spaces.
\newblock {\em IEEE Trans. Inform. Theory}, 68(2):881--890, 2022.

\bibitem[KS20]{kissszonyi}
Gy\"{o}rgy Kiss and Tam\'{a}s Sz\H{o}nyi.
\newblock {\em Finite geometries}.
\newblock CRC Press, Boca Raton, FL, 2020.

\bibitem[Met06]{metsch2006}
K.~Metsch.
\newblock How many {$s$}-subspaces must miss a point set in {${\rm PG}(d,q)$}.
\newblock {\em J. Geom.}, 86(1-2):154--164, 2006.

\end{thebibliography}
\end{document}